\documentclass[11pt,a4paper,oneside,reqno,final]{amsart}

\usepackage[T1]{fontenc}
\usepackage[utf8]{inputenc}
\usepackage{lmodern}
\usepackage{microtype}

\usepackage{amsmath,amssymb,amsthm,mathtools,mathrsfs}
\usepackage[dvipsnames]{xcolor}
\usepackage{graphicx}
\usepackage{tikz}
\usepackage{caption,subcaption}
\usepackage{float,placeins}
\usepackage{comment}
\usepackage[shortlabels]{enumitem}
\usepackage{setspace}

\IfFileExists{bbm.sty}{\usepackage{bbm}}{}
\IfFileExists{dsfont.sty}{\usepackage{dsfont}}{\let\mathds\mathbf}

\usepackage{hyperref}
\hypersetup{
  colorlinks=true,
  linkcolor=blue,
  citecolor=teal,
  urlcolor=black,
  pdftitle={Anchored isoperimetry in percolation},
  pdfauthor={Sahar Diskin, Lyuben Lichev, Ritvik Ramanan Radhakrishnan, Benny Sudakov, Laury Verhoeven}
}
\usepackage[capitalise]{cleveref}

\allowdisplaybreaks

\setlist{itemsep=2pt,parsep=1pt,topsep=2pt,partopsep=0pt}
\setlist[enumerate]{leftmargin=*,labelindent=\parindent}

\theoremstyle{plain}
\newtheorem{theorem}{Theorem}
\newtheorem{thm}{Theorem}[section]

\newtheorem{lemma}[thm]{Lemma}
\newtheorem{proposition}[thm]{Proposition}

\theoremstyle{definition}

\newtheorem{algorithm}{Algorithm}

\newtheorem{remark}[thm]{Remark}

\makeatletter
\def\namedlabel#1#2{\begingroup
  \def\@currentlabel{#2}%
  \label{#1}\endgroup
}

\newcommand*{\barfix}[2][.175ex]{%
  \mathpalette{\@barfix{#1}}{#2}%
}
\newcommand*{\@barfix}[3]{%
  \vbox{%
    \kern#1\relax
    \hbox{$#2#3\m@th$}%
  }%
}
\makeatother

\newcommand{\cB}{\mathcal{B}}

\newcommand{\cE}{\mathcal{E}}
\newcommand{\cF}{\mathcal{F}}

\newcommand{\bC}{\mathbf{C}}
\newcommand{\bF}{\mathbf{F}}
\newcommand{\bH}{\mathbf{H}}
\newcommand{\bT}{\mathbf{T}}
\newcommand{\bw}{\mathbf{w}}
\newcommand{\br}{\mathbf{r}}

\usepackage[margin=0.85in]{geometry}

\def\eps{\varepsilon}

\title{Anchored isoperimetry in percolation}

\author[Diskin, Lichev, Radhakrishnan, Sudakov, Verhoeven]{%
\mbox{Sahar Diskin\textsuperscript{1}}\quad
\mbox{Lyuben Lichev\textsuperscript{2}}\quad
\mbox{Ritvik Ramanan Radhakrishnan\textsuperscript{1}}\\
\mbox{Benny Sudakov\textsuperscript{1}}\quad
\mbox{Laury Verhoeven\textsuperscript{1}}}
\date{}

\makeatletter
\g@addto@macro\@setauthors{%
  \begingroup
  \centering\normalfont\footnotesize
  \setstretch{1.0}\setlength{\parskip}{0pt}%
  \textsuperscript{1}Department of Mathematics, ETH Z\"urich,
  R\"amistrasse 101, 8092 Z\"urich, Switzerland\par
  \textsuperscript{2}Institute of Statistics and Mathematical Methods in Economics,\par
  TU Wien, A-1040 Vienna, Austria\par
  \endgroup
}
\makeatother

\begin{document}

\begin{abstract}
We show that, for any infinite transitive graph $G$, the infinite cluster after supercritical percolation almost surely inherits the anchored isoperimetry of $G$. Moreover, the probability of violating the corresponding isoperimetric inequality decays exponentially in the isoperimetric profile of the host graph. 
This was previously known only for nonamenable transitive graphs, $\mathbb{Z}^d$, and very recently for transitive graphs of polynomial growth. Our proof combines the thinning-and-sprinkling technique with a novel counting algorithm.
\end{abstract}

\maketitle

\section{Introduction}

Percolation provides a natural way to probe the geometry of an infinite graph $G$: randomly delete its edges and ask which structural properties survive. In the supercritical regime, the resulting random graph contains infinite connected components. A fundamental question is to what extent these clusters retain the large-scale geometric properties of the original graph. One particularly important aspect of this question is isoperimetry, which measures how the boundary of a vertex set grows with its size.

More precisely, let $G=(V,E)$ be an infinite, connected, locally finite, vertex-transitive graph of degree $d$. In Bernoulli bond percolation, each edge of $G$ is retained independently with probability $p\in[0,1]$. This produces a random subgraph $G_p$ with distribution $\mathbb P_p$, whose connected components are called \emph{clusters}. The critical probability is defined as
\[
p_c=p_c(G)\coloneqq
\inf\{p\in[0,1]\colon G_p
\text{ contains an infinite cluster almost surely}\}.
\]
The regimes $p<p_c$ and $p>p_c$ are called \emph{subcritical} and \emph{supercritical}, respectively. Percolation was introduced in 1957 by Broadbent and Hammersley~\cite{BH57}, and was initially studied mainly on the hypercubic lattice $\mathbb Z^d$. An important step towards a more general understanding was made by Benjamini and Schramm~\cite{perco_beyond_zd}, who initiated the systematic study of percolation on arbitrary infinite transitive graphs in 1996.

An important quantitative measure of the global geometry of a graph is its \emph{isoperimetric profile}. For a finite set $S\subseteq V$, let $\partial_G S$ denote the set of edges of $G$ with exactly one endpoint in $S$. The isoperimetric profile of $G$ is defined by
\[
\Phi_G(n)\coloneqq
\inf\{|\partial_G W|\colon
W\subseteq V,\ n\le |W|<\infty\}.
\]
For example, $\Phi_G(n)=\Theta(n)$ when $G$ is nonamenable, whereas
$\Phi_{\mathbb Z^d}(n)=\Theta(n^{(d-1)/d})$
for $d\ge 2$.
In the study of percolation, one might naturally hope that infinite supercritical clusters inherit the isoperimetric profile of $G$, up to multiplicative constants. However, such inequalities cannot hold uniformly over all finite subsets of a cluster. Indeed, when $p\in(p_c,1)$, infinite clusters contain arbitrarily long bare paths, and taking a long subpath of their internal vertices yields a large connected set with only two boundary edges (see, e.g., \cite{HH21,H22}).

The natural way to overcome this difficulty is to consider an \emph{anchored} version of isoperimetry, where we restrict attention to connected sets containing a fixed vertex. Given an infinite connected graph $H$ and a vertex $o\in V(H)$, its anchored isoperimetric profile is defined by
\[
\Phi_{H,o}(n)\coloneqq
\inf\{|\partial_H W|\colon
o\in W\subseteq V(H),\
W\text{ is connected},\
n\le |W|<\infty\}.
\]
Let $C_o$ denote the cluster containing $o$ in $G_p$. When $p>p_c$ and $C_o$ is infinite, it is natural to ask whether its anchored isoperimetric profile is comparable to that of $G$. More precisely, for infinite connected graphs $H\subseteq G$ containing $o$, we say that $H$ has \emph{anchored isoperimetry with respect to $G$} if
\[
\liminf_{n\to\infty}
\frac{\Phi_{H,o}(n)}{\Phi_G(n)}>0.
\]
We say that $G$ has \emph{anchored isoperimetry} if, for every $p>p_c(G)$, the cluster $C_o$ almost surely has anchored isoperimetry with respect to $G$, conditionally on being infinite. When $G$ is nonamenable, we have $\Phi_G(n)=\Theta(n)$, and anchored isoperimetry of $H$ with respect to $G$ simply means that there exists $c>0$ such that every sufficiently large finite connected set $W\subseteq V(H)$ containing $o$ satisfies $|\partial_H W|\ge c|W|$. This property is called \emph{anchored expansion}. Beyond their geometric significance, these notions have important consequences for random walks and phase transitions. In particular, anchored expansion implies positive speed of the simple random walk on bounded-degree graphs~\cite{Vir_g_2000}; see also~\cite{delmotte2015exittimeanchoredexpansion,HH21,HSS00}.

The notion of anchored expansion was introduced by Benjamini, Lyons, and Schramm~\cite{BLS99}, who asked whether it is preserved under supercritical percolation. Chen and Peres~\cite{Peres}, with an appendix by Pete, established anchored expansion for percolation parameters sufficiently close to one. Subsequently, Pete~\cite{Pete08}, using ideas from that appendix, proved the corresponding anchored isoperimetric inequality for supercritical percolation on $\mathbb Z^d$.
The question for nonamenable transitive graphs was eventually resolved for every $p>p_c$ by Hermon and Hutchcroft~\cite{HH21}, who also formulated the general anchored isoperimetry problem for arbitrary transitive graphs (Conjecture~5.4 in~\cite{HH21}). More recently, Martineau and Panagiotis~\cite{MP26} established anchored isoperimetry for transitive graphs of polynomial growth.

Our main result settles this problem for all infinite connected transitive graphs. Moreover, it provides a quantitative strengthening: the probability that an infinite cluster violates the desired anchored isoperimetric inequality at scale $n$ decays exponentially in $\Phi_G(n)$.

\begin{theorem}\label{thm: main}
Fix any infinite connected transitive graph $G$ with a vertex $o\in V(G)$. For every $p>p_c(G)$, there is a constant $c=c(G,p)>0$ such that, for every $n\ge 1$,
\[
\mathbb{P}_p\bigl(
|C_o|=\infty,\,
\Phi_{C_o,o}(n)<c\cdot\Phi_G(n)
\bigr)
\le \exp\bigl(-c\cdot\Phi_G(n)\bigr).
\]
\end{theorem}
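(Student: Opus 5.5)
We reduce the statement to a union bound over ``bad'' connected sets, made to work by combining thinning-and-sprinkling with an efficient counting of the relevant sets. Fix $p>p_c$ and choose $p_c<p_1<p$ and $\delta>0$ with $p=p_1+\delta(1-p_1)$, so that $G_p$ is the union of independent copies $G_{p_1}$ and $G_\delta$. Suppose the event fails at scale $n$. Then there is a finite connected $W\ni o$ in $C_o$ with $|W|\ge n$ and $|\partial_{C_o}W|<c\Phi_G(n)$. Since $W$ is finite, $\Phi_G$ is monotone and $|W|\ge n$, we have $|\partial_G W|\ge \Phi_G(n)$. So all but a $c$-fraction of the $G$-boundary edges of $W$ are closed in $G_p$, while $C_o$ escapes $W$ through the few open boundary edges.

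We cannot union bound over all connected $W$: there are exponentially many in $|W|$, not in $\Phi_G(n)$. The key step is therefore a coarse-graining, or counting algorithm. We would show that every such bad event is witnessed by a structure $\mathcal S$ from a family of size at most $\exp(C\Phi_G(n)\log(1/c)+\dots)$, with $C$ independent of $c$. The structure $\mathcal S$ records only the boundary $\partial_G W$ together with enough information to reconstruct the closed cut. A natural choice is the set of boundary edges itself, encoded via the minimal edge cut separating $o$ from infinity in $G$ restricted to $W\cup\partial W$. We would try to prove that the number of such cuts of size $m$ around $o$ with the relevant connectivity is at most $\exp(C m)$. This is a Peierls-type count of connected cutsets, which holds in any transitive graph with a suitable connectivity notion (cutsets are $R$-connected for some $R$ by Babson--Benjamini / Timár). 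Here we must be careful, since we need to count cuts of size $m\ge\Phi_G(n)$ enclosing $\ge n$ vertices. Summing over $m$ with weights $e^{-c'm}$ handles larger boundaries.

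For a fixed cut of size $m$ with at most $cm$ open edges, we want probability $e^{-c''m}$ with $c''>C$. Pure closedness of $(1-c)m$ edges gives $(1-p)^{(1-c)m}$, which beats $e^{Cm}$ only for $p$ near 1 (the Chen--Peres regime). For general $p>p_c$ we therefore use sprinkling. First, by supercriticality (uniqueness or finite-energy plus the Hermon--Hutchcroft style ``many disjoint clusters'' estimate), in $G_{p_1}$ the vertices adjacent to the cut carry many large clusters reaching distance $\gg$ from it. Then the $\delta$-sprinkle must fail to connect these clusters across the cut except through $\le cm$ edges, which costs $(1-\delta)^{\Omega(m)}$ per edge-disjoint opportunity. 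Choosing $c$ small relative to $\delta$ yields the exponential gain. We also need a lemma that for transitive $G$, an $\Omega(1)$ fraction of boundary vertices of any large set lie in $p_1$-clusters of diameter at least $r$ with exponentially small failure. This should follow from the supercritical sharpness results of Contreras--Martineau--Tassion / Hutchcroft that hold for all transitive graphs.

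The main obstacle is making the count compatible with the sprinkling. The exponential-in-$m$ entropy of cutsets must be beaten by the sprinkling gain. The gain in turn depends on $p_1$-clusters, which are themselves random and correlated with the cut. Our first attempt is to explore $W$ via an algorithm that reveals $p_1$-clusters of boundary vertices only up to a fixed radius $r$, and to encode $W$ by these revealed clusters. This reduces the entropy to $\exp(Cm/r)$ for large $r$, while the sprinkling gain remains $\exp(-\delta' m)$. Taking $r$ large and then $c$ small then closes the union bound.
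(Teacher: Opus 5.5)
There is a genuine gap, and it sits exactly at the step you call the main obstacle. Your union bound is organised around the $G$-boundary $m=|\partial_G W|\ge\Phi_G(n)$. You pay an entropy $e^{Cm}$ for cutsets, and gain at most $e^{-c(p)m}$ per cut. Here $C$ depends only on $G$, but $c(p)\to0$ as $p\downarrow p_c$. The Peierls count behind $e^{Cm}$ is itself only known under extra hypotheses, such as finite presentability via coarse connectivity of minimal cutsets, and not for arbitrary transitive graphs. Sprinkling is only a coupling and cannot improve the probability for a \emph{fixed} cut. So everything rests on your final claim that revealing $p_1$-clusters to radius $r$ compresses the description of $W$ to $e^{Cm/r}$. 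You give no mechanism for this, and none is available in general. Arbitrary transitive graphs have no renormalisation or local uniqueness, and a cut of size $m$ can be spread over $\Theta(m)$ well-separated places, each meeting a different cluster, so nothing is compressed. Note also that your stated target $\exp(C\Phi_G(n)\log(1/c))$ gets \emph{worse} as $c\to0$; what actually closes is $\exp(Cc\,\Phi_G(n)\log(1/c))$. Similarly, your lemma that an $\Omega(1)$ fraction of the boundary of \emph{any} large set sees large $p_1$-clusters, with exponentially small failure probability, is not a known consequence of supercritical sharpness. The available input is only the lower bound $\mathbb{P}_q(t_q(S)\ge t)\ge(1-\eps)^t$ of Proposition~\ref{prop: touches}. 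It must moreover be applied to a random $W$ that is correlated with the clusters, which you flag but do not resolve.

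The missing idea is to make the entropy depend only on the number $k=\alpha m$ of \emph{open} boundary edges, with $\alpha$ a free small parameter. The paper restricts to boundary-minimisers in a ball; their boundaries are important separators, so there are at most $4^k$ of them in any configuration. For each fixed candidate $S$, it thins so that $\partial_pS$ becomes $q$-closed, at cost $(1-q/p)^k$. It then uses FKG with Proposition~\ref{prop: touches} to force $t=c_0m$ touches to infinite $q$-clusters, at cost $(1-\eps)^{-t}$. The pointwise bound $Z\le4^k$ on the number of such witnesses turns the expectation back into a probability, and this is what decouples $S$ from the touches. In the sprinkling step it reveals the $q$-configuration and the union $K_q$ of infinite clusters. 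It then union-bounds not over sets $S$ but over their fingerprints $U=S\cap M$ on the vertices adjacent to $K_q$. The encoding algorithm gives at most $\sum_{i\le k}\binom{3k+(d+1)u}{i}\le(3e(du+k)/k)^k$ such fingerprints (Proposition~\ref{prop: feasible}). This is set against a Chernoff cost $e^{-(p-q)\max\{t,u\}/16}$ for at most $k$ of the $b(U)\ge\max\{t,u\}$ edges into $K_q$ being opened. Taking $\eps$ small relative to $p-q$, and $\alpha$ small relative to $c_0$ and $d$, closes the bound for every $p>p_c$ with no Peierls count or coarse-graining.
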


Previous approaches to anchored isoperimetry have relied on
\emph{supercritical sharpness}, which provides an upper bound
on the probability that a vertex belongs to a large but finite
cluster. Very recently, supercritical sharpness was established for all infinite transitive graphs~\cite{DERST26}. More precisely, it was shown that, for every
$p>p_c(G)$, there exists $c=c(G,p)>0$ such that
\[
\mathbb{P}_p(n\le |C_o|<\infty)
\le \exp(-c\Phi_G(n)),
\]
for every $n\ge 1$.
This result naturally suggests deriving anchored isoperimetry
directly from the decay of finite cluster sizes, as in the
previously known cases~\cite{Pete08,HH21,MP26}.
However, the general approach to converting such tail bounds
into anchored isoperimetry incurs a logarithmic loss in $\Phi_G$
(see~\cite[Proposition~3.1]{H22}), and therefore does not
yield the desired bound for arbitrary transitive graphs.

Our proof takes a different route. It combines the
thinning-and-sprinkling method, developed in recent papers ~\cite{DK24,ADLZ25,CDLS26,CHS26}, with a novel
counting algorithm. The key idea is to count the possible
ways in which connected sets can touch infinite clusters,
rather than counting the sets themselves. This `reduction
in entropy' allows us to avoid the
logarithmic loss and obtain a short, unified proof of
anchored isoperimetry for all transitive graphs.
Moreover, the argument uses transitivity only through
a probabilistic estimate from~\cite[Proposition~2.1]{DERST26},
without requiring any detailed structural information
about the underlying graph.

The rest of this paper is organised as follows.
We present a high-level overview of the proof in
Section~\ref{s:outline}, and give the full details
in Section~\ref{s:proof}.

\section{Proof outline}\label{s:outline}

Rather than invoking supercritical sharpness directly, our proof
relies on the following key estimate from~\cite{DERST26},
which concerns the number of edges leaving a finite set
that can connect to infinity without returning to the set.

For a finite set $S\subseteq V$, let
$\Phi_G(S)\coloneqq
\min\{|\partial_G W|: S\subseteq W,\ |W|<\infty\}$
denote the minimum size of an edge cutset separating $S$
from infinity. Given $q>p_c(G)$, we also write
\[
t_q(S)\coloneqq
|\{uv\in E(G):u\in S,\,
v\xleftrightarrow{G_q\setminus S}\infty\}|.
\]
The following is a convenient consequence of
\cite[Proposition~2.1]{DERST26}.

\begin{proposition}\label{prop: touches}
Fix an infinite connected transitive graph $G$ of degree $d$.
For every $q\in(p_c(G),1]$ and every $\eps\in(0,1)$,
there exists $c>0$ such that, for every finite
$S\subseteq V$ and every positive integer
$t\le c\Phi_G(S)$,
\[
\mathbb P_q(t_q(S)\ge t)\ge (1-\eps)^t.
\]
\end{proposition}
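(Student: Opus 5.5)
We will derive the statement from \cite[Proposition~2.1]{DERST26}. We do not know its exact wording, so we assume it is the following kind of estimate: for $q>p_c(G)$ there are constants $c_0,\delta>0$ such that, for every finite $S$,
\[
\mathbb P_q\bigl(t_q(S)\ge c_0\Phi_G(S)\bigr)\ge 1-e^{-\delta\Phi_G(S)}.
\]
In words, with high probability a positive proportion of the minimal cutset of $S$ consists of edges leading to $\infty$ off $S$. If that is the form, the claimed bound with $(1-\eps)^t$ is a repackaging obtained by adjusting constants and splitting into two regimes.

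\emph{Small $t$.} Suppose $t\le K$ for a constant $K=K(\eps)$. Then $(1-\eps)^t\le(1-\eps)^{\,\min(t,1)}$, so it suffices to show $\mathbb P_q(t_q(S)\ge t)\ge 1-\eps$ once $\Phi_G(S)$ is large. Choose $c$ small enough that $t\le c\Phi_G(S)$ with $t\ge 1$ forces $\Phi_G(S)\ge 1/c$ to be large. Then:
\begin{itemize}
\item $c\le c_0/K$ gives $t\le c_0\Phi_G(S)$, so the event $\{t_q(S)\ge t\}$ contains $\{t_q(S)\ge c_0\Phi_G(S)\}$;
\item $c$ small gives $e^{-\delta\Phi_G(S)}\le e^{-\delta/c}\le\eps$.
\end{itemize}
Hence $\mathbb P_q(t_q(S)\ge t)\ge 1-\eps\ge(1-\eps)^t$.

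\emph{Large $t$.} Suppose $t>K$. Since $t\le c\Phi_G(S)\le c_0\Phi_G(S)$, the same inclusion gives $\mathbb P_q(t_q(S)\ge t)\ge 1-e^{-\delta\Phi_G(S)}$. As $\Phi_G(S)$ is large, this is at least $1/2$. Taking $K$ with $(1-\eps)^K\le 1/2$ then gives $\mathbb P_q(t_q(S)\ge t)\ge 1/2\ge(1-\eps)^t$.

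\emph{Fallback if the cited proposition is weaker.} Suppose \cite[Proposition~2.1]{DERST26} only gives $\mathbb P_q\bigl(t_q(S)\ge c_0\Phi_G(S)\bigr)\ge\eta$ for a fixed $\eta>0$, rather than probability tending to one. Then the step requiring probability $1-\eps$ must be obtained differently, and this is the main obstacle. We would try two routes.
\begin{itemize}
\item \emph{Monotonicity in $q$.} The event $\{t_q(S)\ge t\}$ is increasing, and $\eps$ is allowed to depend on $q$. We would apply the weak estimate at a slightly smaller parameter $q'\in(p_c,q)$ and sprinkle to boost the probability towards one. Concretely, condition on $G_{q'}$, which has many edges reaching infinity off $S$. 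Each further edge of the boundary of $S$ independently attaches to an infinite $G_q\setminus S$ cluster with probability bounded below, so a Chernoff bound applies.
\item \emph{Direct bound by independence.} Alternatively, use an FKG/Harris product bound over the $t$ edges of a fixed family of disjoint routes to infinity. This directly yields something of the form $(1-\eps)^t$, and we would use this form if the cited result is stated per-edge.
\end{itemize}
In either route, the remaining work is to check that $c$ can be chosen uniformly in $S$, which follows because all estimates depend on $S$ only through $\Phi_G(S)$.
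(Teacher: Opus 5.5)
You have a genuine gap: the proposal does not prove the proposition, because your main argument rests on a guessed statement of \cite[Proposition~2.1]{DERST26} that is stronger than the claim itself. The paper does not prove this proposition either; it imports it from that reference as a direct consequence.

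The hypothesis you attribute to the reference is $\mathbb P_q(t_q(S)\ge c_0\Phi_G(S))\ge 1-e^{-\delta\Phi_G(S)}$, uniformly over all finite $S$. This is a surface-order large-deviation bound, and it is strictly stronger than what you are asked to prove. Indeed, it gives $\mathbb P_q(t_q(S)\ge t)\ge 1-\eps\ge(1-\eps)^t$ in one line once $c\le c_0$ and $e^{-\delta/c}\le\eps$, so your split at $K$ is superfluous. The proposition, by contrast, allows $\mathbb P_q(t_q(S)\ge c\Phi_G(S))$ to be exponentially small in $\Phi_G(S)$. Nothing in the paper supports the stronger form. On the contrary, the proposition is deliberately stated with the factor $(1-\eps)^t$, which is all that is used later: the thinning cost $(1-\eps)^{-t}$ only has to be beaten by the sprinkling gain $e^{-(p-q)t/20}$. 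As written, your argument replaces the claim by a stronger unverified one, and the constant bookkeeping is its only content.

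Your fallback routes do not close the gap.
\begin{itemize}
\item \emph{Quantifiers.} In the statement $\eps\in(0,1)$ is arbitrary, and it must be small in the application. So you may not choose it, and a bound valid only for one fixed $\eps$ is useless.
\item \emph{FKG product.} A Harris--FKG product over $t$ prescribed endpoints $v_1,\dots,v_t$ gives at most $\prod_i\mathbb P_q(v_i\leftrightarrow\infty\text{ in }G_q\setminus S)\le\theta(q)^t$, with $\theta(q)<1$ for $q<1$. So it can never yield $(1-\eps)^t$ with $\eps<1-\theta(q)$. Reaching arbitrarily small $\eps$ requires exploiting that only $t\le c\Phi_G(S)$ of the $\Phi_G(S)$ edge-disjoint escape routes need to succeed, uniformly over all geometries of $S$. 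That is precisely the nontrivial, transitivity-dependent input the paper takes from the reference.
\item \emph{Sprinkling.} This route confuses raising the \emph{number} of touches with raising the \emph{probability} of the event. On the event of probability $\eta$ where $G_{q'}$ already has many touches, monotonicity suffices without sprinkling. On its complement you have nothing to condition on.
\item \emph{What $t_q$ depends on.} $t_q(S)$ does not depend on the state of the boundary edge $uv$ at all, only on whether $v\leftrightarrow\infty$ in $G_q\setminus S$. Creating such a connection by sprinkling is neither uniformly likely in $v$ and $S$ nor independent across edges, so no Chernoff bound is available.
\item \emph{Uniformity.} Your claim that uniformity in $S$ holds ``because all estimates depend on $S$ only through $\Phi_G(S)$'' presupposes exactly what has to be shown.
\end{itemize}
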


The relation between this notion of ``touches'' (that is, edges between $S$ and the infinite clusters in $G_q$) and anchored isoperimetry goes back to Pete \cite{Pete08}. 
Roughly speaking, Pete conjectured \cite[Conjecture 12.32]{pete2014probability} that the probability under $q$-percolation that the cluster $C_o$ is finite and satisfies $t_q(C_o)\ge t$ should decay exponentially in $t$ (note that this is, morally, the opposite direction of Proposition~\ref{prop: touches} for a special random set $S$). 
This conjecture, combined with supercritical sharpness and the surgery approach of \cite[Appendix A]{Peres}, would imply Theorem~\ref{thm: main}; indeed, this was the approach taken in \cite{Pete08,MP26} for $G=\mathbb{Z}^d$ and for transitive graphs of polynomial growth. 
However, for arbitrary transitive graphs, the conjecture of Pete remains open; we take a different path instead.

Call a connected set $S\subseteq C_o$ \textit{boundary-minimiser} if there is no finite connected superset $W\supset S$ with $|\partial_{C_o}W| \le |\partial_{C_o}S|$. (Formally, we will work inside a large finite ball, and this notion will be restricted to this ball).
Note that $\Phi_G(S)$ is nondecreasing under inclusion and therefore, for Theorem~\ref{thm: main} to fail, there must be a boundary-minimiser set in $G_p$ containing $o$ with rather large boundary in $G$ but rather small boundary in $G_p$.

To analyse this event, we will use a variant of the thinning-and-sprinkling method.
Fix $q\in(p_c,p)$. Since we are interested in sets $S$ with few boundary edges in $G_p$, requiring that $S$ also has no open boundary edges in $G_q$ costs, for a fixed $p$-configuration, a factor of $\big(1-q/p\big)^{|\partial_{G_p} S|}$.
We further restrict our attention to sets with many $q$-touches; by Proposition~\ref{prop: touches} and FKG, this costs a factor of at most $(1-\eps)^{-t}$ when $t\le c\Phi_G(S)$. 
Moreover, it follows from \cite{M11} that the number of boundary-minimisers with at most $k$ boundary edges is at most $4^k$.
Our goal will thus be to show that it is very unlikely that $C_o$ contains a boundary-minimiser set $S$ (in $G_p$) with many $q$-touches, no boundary edges in $G_q$ and too few boundary edges in $G_p$. 

Fix $n\ge 1$, and consider the following setup. First, reveal the entire $q$-configuration and let $K_q$ be the union of its infinite clusters.
If $o\in K_q$, the obstruction we seek is impossible. Otherwise, reveal the $p$-edges within $G\setminus K_q$, and let $H$ be the component of $o$ in the resulting graph restricted to a large finite ball.
Let $M\subseteq V(H)\setminus\{o\}$ be the vertices with a neighbour in $K_q$. For a connected set $S\subseteq V(H)$ containing $o$, its \textit{fingerprint} $U=S\cap M$, together with the possible touches at $o$, determines all its touch edges.
We want to bound the probability that there is such a set $S$ with no $q$-open boundary edges, $t_q(S)\ge t$ and $|\partial_{G_p}S|\le k$, where $t=\Theta(\Phi_G(n))$ and $k\le ct$ for a sufficiently small constant $c$. 
(We note that the upper bound provided here will hold not only for boundary-minimiser sets, but more generally.)

A natural first attempt is to union bound over all possible sets $S$; however, this approach leads to a severe overcounting. The first key observation is that we can ``reduce the entropy'' by counting instead the \textit{possible fingerprints} $U$, rather than the sets themselves (see Figure~\ref{fig:shared-touches}).

\begin{figure} 
    \centering
    \includegraphics[width=0.7\linewidth]{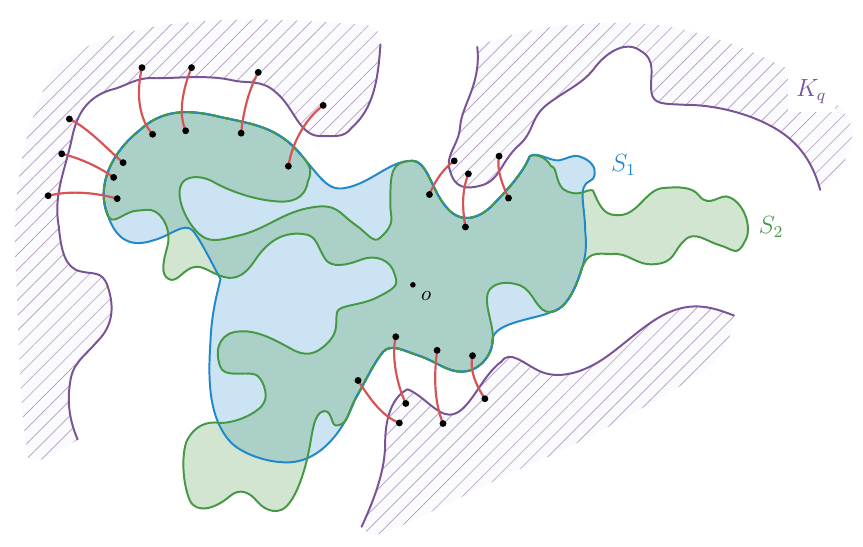}
    \caption{Two different connected sets $S_1$ (blue) and $S_2$ (green) meeting the same touches, drawn in red.}
    \label{fig:shared-touches}
\end{figure}

\FloatBarrier
 
Indeed, once the $q$-configuration has been revealed, each edge between $K_q$ and its complement becomes $p$-open independently with probability $(p-q)/(1-q)$. Thus, for every fixed choice of $t$ such edges, the probability that at most $k$ of them are in $G_p$ is 
\begin{equation}\label{eq:proba}
\mathbb{P}\left(\operatorname{Bin}\left(t,(p-q)/(1-q)\right)\le k\right)\le \exp(-c't)
\end{equation} 
for some $c'>0$ by Chernoff's bound, provided $k\ll t$.
This point justifies why we require that $t_q(S)\ge t\gg k$: while we only ``paid'' for closing $k$ edges in the thinning stage of the argument, the ``price'' for opening at most $k$ edges among $t$ upon sprinkling is much more significant.
Now, we would like to conclude by a union bound argument. Say that $U\subseteq M$ is \emph{$k$-feasible} if $U=W\cap M$ for some connected set $W\subseteq V(H)$ containing $o$ with $|\partial_H W|\le k$. 
It suffices to control the number of such fingerprints of each size $u=|U|\ge t/d$: indeed, the number of edges from $K_q$ touching $U$ lies in the interval $[u,du]$,
and the sprinkling probability \eqref{eq:proba} decays exponentially with this number.

To do so, we develop a novel counting algorithm specialised for counting sets with small boundary.
Roughly speaking, given a witness $W$ for a feasible set $U$, the algorithm grows an explored region $C$ from $o$. The region $C$ stays connected in $G_p$ and is maximised conditionally on keeping the edge cut separating it from $M\setminus V(C)$ of minimum size. Then, the algorithm examines a prescribed edge leaving that side. It writes a $1$ and enlarges the region if the other endpoint belongs to $W$; otherwise it writes a $0$ and deletes the edge. The crucial feature is that the resulting binary word determines $U=W\cap M$, though not necessarily $W$ itself (Lemma~\ref{cl: property 1}). Submodularity shows that the word has at most $k$ zeroes, and the estimate $|\partial_H W|\le k$ allows to bound its length by $3k+(d+1)|U|$ (Proposition~\ref{prop: feasible}). Counting these words gives the required entropy bound; combined with the exponential sprinkling estimate \eqref{eq:proba} and a union bound over the possible sizes $u=|U|$, this completes the argument.

\section{Anchored isoperimetry via thinning and sprinkling}\label{s:proof}

Fix $p>p_c(G)$, choose $q\in(p_c(G),p)$, and consider the natural coupling $G_q\subseteq G_p$. We write $\partial_p$ and $\partial_q$ for the corresponding edge boundaries, and use $\mathbb P$ for the joint law of the coupled configurations. Choose $\eps=\eps(p,q)>0$ sufficiently small (its precise choice will be determined by comparing the thinning and sprinkling estimates at the end of the proof).
By Proposition~\ref{prop: touches}, there exists $c_0=c_0(G,q,\eps)>0$ such that $\mathbb P_q(t_q(S)\ge t')\ge(1-\eps)^{t'}$ whenever $t'\le c_0\Phi_G(S)$. Fix $m\ge1$ and set $t=c_0m$. Let $\alpha=\alpha(G,p,q,c_0)>0$ be sufficiently small (with its precise value to be chosen later) and set $k=\alpha m$. 

Recall that $\cB_m$ is the event that there exists a $G_p$-connected set $S$ containing $o$ with $\Phi_G(S)\ge m$ and $|\partial_pS|\le k$. To work in a finite graph, fix $r\ge1$, put $B=B_r(o)$, and let $\cB_{m,r}$ be the same event with the additional restriction $S\subseteq B$. Since $\cB_{m,r}$ increases to $\cB_m$ as $r\to\infty$, it suffices to obtain a bound on $\mathbb{P}_p(\cB_{m,r})$ which is independent of $r$.

We compare $\cB_{m,r}$ with the (simpler to analyse) event $\cE_{t,k,r}$ that there exists a $G_p$-connected set $o\in S\subseteq B$ satisfying $|\partial_pS|\le k$, $\partial_qS=\varnothing$, and $t_q(S)\ge t$. Thus, $\cE_{t,k,r}$ describes a set with few $p$-open boundary edges, none of them $q$-open, but many potential connections to infinity.
We first show that thinning from $p$ to $q$ transforms a bad anchored set into 
a configuration satisfying each of these properties at a controlled probability cost.

\begin{lemma}\label{l: thinning}
We have
\[
\mathbb P_p(\cB_{m,r})\le (1-\eps)^{-t}\left(\frac{4p}{p-q}\right)^k\mathbb P(\cE_{t,k,r}).
\]
\end{lemma}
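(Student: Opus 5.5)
We reduce $\cB_{m,r}$ to a union over boundary-minimisers, and pay separately for the thinning and for the touches. On $\cB_{m,r}$, choose the witness $S$ canonically: among all $G_p$-connected sets $o\in S\subseteq B$ with $|\partial_pS|\le k$ and $\Phi_G(S)\ge m$, take one that is a boundary-minimiser within $B$. That is, no finite $G_p$-connected $W\supsetneq S$ inside $B$ has $|\partial_pW|\le|\partial_pS|$; to make this well defined, take $S$ maximal among those minimising $|\partial_p S|$. Enlarging $S$ preserves $\Phi_G(S)\ge m$ by monotonicity of $\Phi_G(\cdot)$ under inclusion, so such a choice exists.

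By the result of \cite{M11} quoted in the outline, the set $\mathcal{S}$ of possible boundary-minimisers with $|\partial_p S|\le k$ has at most $4^k$ elements. We use this only in the weaker form that we can write
\[
\mathbb P_p(\cB_{m,r})\le\sum_{(S,F)}\mathbb P_p(\text{$S$ is $G_p$-connected},\ \partial_pS=F),
\]
where the sum runs over at most $4^k$ pairs $(S,F)$ with $|F|\le k$. I expect this step to be the main obstacle, since the $4^k$ count must be made to apply to the deterministic pairs $(S,F)$ that can arise, and the realised pair must be shown to be unique. Here $F\subseteq\partial_GS$ and the event depends only on edges inside $S\cup\partial_GS$.

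For a fixed pair $(S,F)$, thin each $p$-open edge of $F$ independently with retention probability $q/p$, to obtain $G_q$. Conditionally on the $p$-configuration, the probability that all edges of $F$ are $q$-closed is $(1-q/p)^{|F|}\ge\left(\frac{p-q}{p}\right)^k$. This gives
\[
\mathbb P_p(\text{$S$ conn.},\ \partial_pS=F)\le\left(\tfrac{p}{p-q}\right)^k\mathbb P(\text{$S$ conn. in $G_p$},\ \partial_pS=F,\ \partial_qS=\varnothing).
\]
Next we add the touch requirement. The event $\{t_q(S)\ge t\}$ depends only on $q$-edges outside $S$ and is increasing in them. Because $\Phi_G(S)\ge m$ and $t=c_0m\le c_0\Phi_G(S)$, Proposition~\ref{prop: touches} gives probability at least $(1-\eps)^t$. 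We cannot apply FKG blindly, because $\partial_qS=\varnothing$ is decreasing, so we first condition on the states of all edges in $S\cup\partial_G S$, in both $G_p$ and $G_q$. The remaining event $t_q(S)\ge t$ depends only on edges with no endpoint in $S$, since $v\leftrightarrow\infty$ is taken in $G_q\setminus S$ and the touch edges themselves are not required to be open. It is therefore independent of the conditioning, and multiplying by $(1-\eps)^{-t}$ is justified by independence rather than FKG.

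Each summand is then bounded by $(1-\eps)^{-t}(p/(p-q))^k\,\mathbb P(E_{S,F})$, where $E_{S,F}$ is the event that $S$ is $G_p$-connected, $\partial_pS=F$, $\partial_qS=\varnothing$ and $t_q(S)\ge t$. Every $E_{S,F}\subseteq\cE_{t,k,r}$, so summing over the at most $4^k$ pairs gives
\[
\mathbb P_p(\cB_{m,r})\le(1-\eps)^{-t}\left(\tfrac{4p}{p-q}\right)^k\mathbb P(\cE_{t,k,r}).
\]
If the \cite{M11} bound turns out to control only the sets $S$ and not the pairs $(S,F)$, the fallback is to note that $F=\partial_GS\cap E(G_p)$ is determined by the configuration. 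In that case the events indexed by $S$ are already disjoint, and only the number of $S$ needs to be bounded by $4^k$.
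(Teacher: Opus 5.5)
There is a genuine gap, in exactly the step you flagged, and your fallback does not repair it.

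\textbf{Why the union bound fails.} The bound of \cite{M11} is a \emph{pointwise} statement. In each fixed $p$-configuration, $G_p[B\cup N_G(B)]$ has at most $4^k$ important separators of size at most $k$, hence at most $4^k$ $B$-boundary-minimisers with $|\partial_pS|\le k$. It says nothing about how many deterministic sets $S$ can arise as minimisers over all configurations. That number is huge and grows with $r$: every connected $S\ni o$ in $B$ (with $\Phi_G(S)\ge m$) is a $B$-boundary-minimiser with $\partial_pS=\varnothing$ whenever $S$ is exactly the cluster of $o$. So the union bound over ``at most $4^k$ pairs $(S,F)$'' is false.

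\textbf{Why the fallback fails.} The events $\{S\text{ is }G_p\text{-connected},\ \partial_pS=F\}$ are disjoint over $F$ for fixed $S$, but not over $S$: many sets are simultaneously connected with small boundary. There is also no $4^k$ bound on the number of $S$. Once you drop the minimiser property from the summands, even a pointwise count fails. For instance, if $o$ lies on a long bare path of $G_p$, all segments through $o$ along it have $p$-boundary at most $2$.

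\textbf{The missing idea.} The paper uses a first-moment argument. Let $Z$ count the $B$-boundary-minimisers $S$ with $\Phi_G(S)\ge m$, $|\partial_pS|\le k$, $\partial_qS=\varnothing$ and $t_q(S)\ge t$.
\begin{enumerate}
\item Pointwise, $Z\le 4^k$ and $\{Z>0\}\subseteq\cE_{t,k,r}$, so $\mathbb EZ\le 4^k\,\mathbb P(\cE_{t,k,r})$.
\item Summing per-$S$ lower bounds over \emph{all} $S$, and using that $\cB_{m,r}$ forces at least one minimiser, gives $\mathbb EZ\ge(1-\eps)^t(1-q/p)^k\,\mathbb P_p(\cB_{m,r})$.
\end{enumerate}
For this, the per-$S$ events must keep the property ``$S$ is a $B$-boundary-minimiser''. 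That property depends on edges not incident to $S$, so your independence argument for the touches no longer applies.

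What replaces it is a monotonicity observation. Conditionally on the $p$- and $q$-states of the edges incident to $S$, being a $B$-boundary-minimiser is increasing in the remaining $p$-edges. If adding outside edges produced a connected superset $W\supsetneq S$ with no larger boundary, then the component of $S$ inside $W$ before the addition would already have been such a superset. Harris--FKG together with Proposition~\ref{prop: touches} then yields the factor $(1-\eps)^t$. Your thinning step and the locality of $\{t_q(S)\ge t\}$ are fine and carry over.

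A canonical choice of witness would make the events disjoint and remove the need for $4^k$. However, ``$S$ is the canonical witness'' is neither determined by the edges incident to $S$ nor monotone in the remaining edges: adding edges inside another set $S'\ni o$ can create a better candidate. This is why the paper counts all minimisers instead.
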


\begin{proof}
Call a $G_p$-connected set $S\subseteq B$ containing $o$ a \emph{$B$-boundary-minimiser} if every proper $G_p$-connected superset $W\subseteq B$ of $S$ satisfies $|\partial_pW|>|\partial_pS|$. 
In other words, $S$ cannot be enlarged within $B$ without increasing its boundary. Since $\Phi_G$ is increasing under inclusion, whenever $\cB_{m,r}$ occurs, it has a $B$-boundary-minimiser witnessing it. 
(Let us note here that this property is the reason to restrict our graph to the large ball $B$.)

To exploit thinning, we count the number of such witnesses that also have many touches and whose boundaries become entirely $q$-closed. We first bound from below their expected number, and then use a standard counting result for small edge cuts to turn this into a probability estimate.

Let $Z$ denote this number, namely
\[
Z=\sum_{\substack{o\in S\subseteq B\\\Phi_G(S)\ge m}}
\mathds{1}_{\{S\text{ is a }B\text{-boundary-minimiser},\,|\partial_pS|\le k\}}
\mathds{1}_{\{\partial_qS=\varnothing\}}
\mathds{1}_{\{t_q(S)\ge t\}}.
\]

Fix a deterministic set $S$ appearing in this sum, and reveal only
the $p$- and $q$-states of the edges with at least one endpoint in $S$.
Conditional on this exposure and the connectivity of $G_p[S]$, the event that $S$ is a
$B$-boundary-minimiser with $|\partial_pS|\le k$ is increasing
in the remaining $p$-edges.
Indeed, suppose that adding edges outside $S$ creates a connected
superset $W\supsetneq S$ whose boundary is no larger than that of $S$.
Let $T$ be the component containing $S$ in the subgraph induced by
$W$ before these edges were added. Since no edge incident to $S$
has changed, $T\supsetneq S$. Moreover, every previously open
edge leaving $T$ also leaves $W$, since otherwise its other
endpoint would belong to $T$. Thus, the original boundary of $T$
is no larger than the new boundary of $W$, contradicting the
boundary-minimality of $S$.
Furthermore, the event $\{t_q(S)\ge t\}$ is increasing in the
$q$-edges outside $S$. Since the event $\{t_q(S)\ge t\}$ depends only on these unrevealed
edges, FKG and Proposition~\ref{prop: touches} give
\begin{equation}\label{eq:touch_bound}
\begin{split}
&\mathbb P\bigl(S\text{ is a }B\text{-boundary-minimiser},\
|\partial_pS|\le k,\ \partial_qS=\varnothing,\
t_q(S)\ge t\bigr)\\
&\qquad\ge (1-\eps)^t
\mathbb P\bigl(S\text{ is a }B\text{-boundary-minimiser},\
|\partial_pS|\le k,\ \partial_qS=\varnothing\bigr).
\end{split}
\end{equation}

Now, reveal the entire $p$-configuration and thin each $p$-open edge independently with retention probability $q/p$. For every $B$-boundary-minimiser with $|\partial_pS|\le k$, the probability that its entire boundary becomes $q$-closed is at least $(1-q/p)^k$. 
This allows us to bound the right-hand side of~\eqref{eq:touch_bound} from below and deduce that
\begin{equation*}
\begin{split}
&\mathbb P\bigl(S\text{ is a }B\text{-boundary-minimiser},\
|\partial_pS|\le k,\ \partial_qS=\varnothing,\
t_q(S)\ge t\bigr)\\
&\qquad\ge (1-\eps)^t (1-q/p)^k
\mathbb P\bigl(S\text{ is a }B\text{-boundary-minimiser},\
|\partial_pS|\le k\bigr).
\end{split}
\end{equation*}
By summing over all such possible sets $S$, and using that $\cB_{m,r}$ guarantees at least one such boundary-minimiser, we obtain $\mathbb EZ\ge(1-\eps)^t(1-q/p)^k\mathbb P_p(\cB_{m,r})$.

It remains to relate $\mathbb EZ$ to the probability of $\cE_{t,k,r}$. Here we use the notion of \emph{important separators} from~\cite{M11}. An important edge separator between a vertex $o$ and a target set is an inclusion-minimal cut which cannot be replaced by a cut of at most the same size that leaves a strictly larger connected component containing $o$. Marx~\cite[Theorem~2]{M11} proved that there are at most $4^k$ important separators of size at most $k$. (See also Remark \ref{rem: recover}, where we recover this result using the algorithm we present in the next subsection).
We apply this result in the finite graph $G_p[B\cup N_G(B)]$, taking $o$ as the `source' and the exterior neighbourhood of $B$ as the `target'. By the maximality property of a $B$-boundary-minimiser, its boundary is precisely such an important separator. 
Consequently, in every $p$-configuration, there are at most $4^k$ possible $B$-boundary-minimisers with boundary at most $k$.

Thus, $Z\le4^k$ pointwise. Furthermore, $Z$ can be positive only if $\cE_{t,k,r}$ occurs. Hence $\mathbb EZ\le4^k\mathbb P(\cE_{t,k,r})$. Combining this with the lower bound on $\mathbb EZ$ proves the lemma.
\end{proof}

We now bound the probability of the same obstruction from the sprinkling viewpoint. The only additional input is Proposition~\ref{prop: feasible}, which counts the possible fingerprints of connected sets with small boundary on a prescribed set of marked vertices. Its proof is postponed to Subsection~\ref{subsec:algo}.

\begin{lemma}\label{l: sprinkling}
There exists a constant $C=C(d,p,q)>1$ such that
\[
\mathbb P(\cE_{t,k,r})\le C\exp\{-(p-q)t/20\}.
\]
\end{lemma}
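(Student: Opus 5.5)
We will prove the bound by first revealing the whole $q$-configuration and then revealing the $p$-edges in a carefully chosen order, so that the sprinkled edges from $K_q$ to its complement are still unrevealed when the union bound is taken. Let $K_q$ be the union of the infinite $G_q$-clusters. If $o\in K_q$, then no set $S\ni o$ with $\partial_qS=\varnothing$ and $t_q(S)\ge t>0$ can exist: $S$ would have to contain the infinite cluster of $o$, which is impossible for $S\subseteq B$. So we may assume $o\notin K_q$.

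Next we reveal the $p$-states of all edges with both endpoints outside $K_q$. Conditionally on the $q$-configuration, these are independent of the $p$-states of the edges between $K_q$ and $V\setminus K_q$. Each such boundary edge is $q$-closed, so it is $p$-open independently with probability $(p-q)/(1-q)\ge p-q$. Let $H$ be the component of $o$ in the revealed $p$-graph restricted to $B$, and let $M\subseteq V(H)\setminus\{o\}$ be the set of vertices of $H$ with a $G$-neighbour in $K_q$.

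Suppose a witness $S$ for $\cE_{t,k,r}$ exists. Since $\partial_qS=\varnothing$, every touch edge $uv$ with $v\xleftrightarrow{G_q\setminus S}\infty$ has $v\in K_q$. Also, $S$ is $G_p$-connected and avoids $K_q$, so it is connected in $H$ and satisfies $|\partial_HS|\le|\partial_pS|\le k$. Hence the fingerprint $U=S\cap M$ is $k$-feasible. Moreover, the touch edges of $S$ form a set of at least $t$ edges between $U\cup\{o\}$ and $K_q$, of which at most $k$ are $p$-open. Set $u=|U|$. The touch edges at $o$ number at most $d$, so the set $F_U$ of all edges from $U$ to $K_q$ has $|F_U|\ge t-d$, and $u\ge (t-d)/d$. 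The set $F_U$ is determined by $U$ and by the already revealed $q$-configuration.

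By the Chernoff bound \eqref{eq:proba}, for each fixed $k$-feasible $U$, the probability that at most $k$ edges of $F_U$ are $p$-open is at most $\exp(-(p-q)|F_U|/8)$, provided $k\le (p-q)|F_U|/2$. This holds once $\alpha$ is small relative to $c_0(p-q)$. We then union bound over $U$, grouped by size $u$. Proposition~\ref{prop: feasible} encodes each $k$-feasible $U$ by a binary word of length at most $3k+(d+1)u$ with at most $k$ zeroes. This gives at most
\[
\sum_{j\le k}\binom{3k+(d+1)u}{j}\le \Bigl(\frac{e(3k+(d+1)u)}{k}\Bigr)^k
\]
fingerprints of size $u$. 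Since $u\ge (t-d)/d$ and $k=\alpha m=(\alpha/c_0)t$, this count is $\exp\bigl(O(k\log(u/k))\bigr)$. For $\alpha$ small, the count is therefore at most $\exp((p-q)u/40)$ uniformly in $u\ge (t-d)/d$, because $x\log(1/x)\to0$. Using $|F_U|\ge \max(u,t-d)$, the contribution of size $u$ is at most $\exp(-(p-q)(t-d)/16-(p-q)u/80)$. Summing the geometric series in $u$ and absorbing $e^{(p-q)d/16}$ into the constant gives $C\exp\{-(p-q)t/20\}$, with $C$ depending only on $d,p,q$.

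The main obstacle is the entropy/energy balance. The bound on the number of fingerprints must hold uniformly in $u$ and must not grow with $|M|$ or $r$. This is exactly why Proposition~\ref{prop: feasible} is needed: it bounds the word length in terms of $u$ and $k$ alone. The constants then have to be tuned carefully, with $\alpha\ll c_0(p-q)$ and the Chernoff exponent split between the $t$-part and the $u$-part, so that the final rate $(p-q)/20$ comes out and the sum over $u$ converges.
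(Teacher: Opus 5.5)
Your proposal is correct and follows essentially the same route as the paper. You reveal $G_q$ and the $p$-edges off $K_q$, union-bound over $k$-feasible fingerprints $U=S\cap M$ via Proposition~\ref{prop: feasible}, and apply Chernoff to the still-unrevealed edges into $K_q$.

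The only difference is that you drop the edges at $o$, working with $E_G(U,K_q)$ instead of the paper's $b(U)=|E_G(U\cup\{o\},K_q)|=t_q(S)$. This costs an additive $d$. As a result, the requirement $k\le (p-q)|F_U|/2$ and your entropy bound $\exp((p-q)u/40)$ (which needs $u\gtrsim t$) only hold once $t$ exceeds a constant multiple of $d$. You should say explicitly that smaller $t$ are absorbed into $C=C(d,p,q)$, which is trivial since the right-hand side is then at least $1$.
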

\begin{proof}
Reveal the entire $q$-configuration, and let $K_q$ be the union
of its infinite clusters. Next, reveal the $p$-states of all
edges with both endpoints in $G\setminus K_q$. Denote by
$\cF$ the configuration revealed so far.
If $o\in K_q$, then $\cE_{t,k,r}$ is impossible: a finite set containing $o$ cannot have $q$-closed boundary. Otherwise, let $H$ be the component of $o$ in the subgraph of $(G\setminus K_q)_p$ induced by $B$, and mark the vertices
$M=\{u\in V(H)\setminus\{o\}:u\text{ has a neighbour in }K_q\}$.

Say that $U\subseteq M$ is \emph{$k$-feasible} if $U=S\cap M$ for some $H$-connected set $S\subseteq V(H)$ containing $o$ with $|\partial_HS|\le k$. For such a set $U$, write $b(U)=|E_G(U\cup\{o\},K_q)|$. Every witness $S$ to $\cE_{t,k,r}$ lies in $H$, since its $q$-boundary is closed. Hence $U=S\cap M$ is $k$-feasible, $b(U)=t_q(S)\ge t$, and at most $k$ of these $b(U)$ edges are $p$-open.

Conditioned on the revealed configuration $\cF$, all edges between $G\setminus K_q$ and $K_q$ are $q$-closed, and they become $p$-open independently with probability $\theta=(p-q)/(1-q)$. Assuming $k\le\theta t/2$, a Chernoff bound and the relation $\theta\ge p-q$ ensure that, for every fixed feasible $U$ with $b(U)\ge t$,
\[
\mathbb P\bigl(\operatorname{Bin}(b(U),\theta)\le k\bigr)
\le\exp(-\theta b(U)/16)\le\exp\{-(p-q)b(U)/16\}.
\]
We will now union-bound over the possible choices of a $k$-feasible set $U$, rather than over all possible witnesses $S$. For this, we use the following counting proposition; its proof is delayed to the next subsection.

\begin{proposition}\label{prop: feasible}
Given a finite graph $H$ with maximum degree at most $d$ containing a vertex $o$ and a vertex set $M\subseteq V(H)\setminus\{o\}$, the number of $k$-feasible vertex subsets of $M$ of size $u$ is at most
\[
\sum_{i=0}^{k}\binom{3k+(d+1)u}{i}.
\]
\end{proposition}

We apply the proposition to the exposed graph $H$ and the marked set $M$. Note that, by excluding $o$ from $M$ and including it automatically in $b(U)$, we avoid any restriction on whether $o$ has neighbours in $K_q$. Notice that $u=|U|\le b(U)\le d(u+1)$. 
Thus, $b(U)\ge t$ implies $u\ge t/d-1$, and $b(U)\ge\max\{t,u\}$ for every relevant $U$. Conditioning on $\cF$, the proposition and a union bound give
\begin{equation}\label{eq:ineq}
\mathbb P(\cE_{t,k,r}\mid\cF)
\le\sum_{u\ge t/d-1}
\left(\sum_{i=0}^{k}\binom{3k+(d+1)u}{i}\right)
\exp\Big\{-(p-q)\frac{\max\{t,u\}}{16}\Big\}.    
\end{equation}
Using that the binomial expression in the term corresponding to $u$ in the above sum is bounded from above by $(3e(du+k)/k)^k$, and choosing $\alpha$ sufficiently small with respect to $c_0$ and $d$ shows that the right-hand side of~\eqref{eq:ineq} is at most
\begin{align*}
\sum_{u\ge t/d-1}
\left(\frac{3e(du+k)}{k}\right)^k
\exp\Big\{-(p-q)\frac{\max\{t,u\}}{16}\Big\}
&\le \sum_{u\ge t/d-1}
\exp\Big\{-(p-q)\frac{\max\{t,u\}}{18}\Big\}\\
&\le Ce^{-(p-q)t/20}.
\end{align*}
for a suitably large constant $C=C(d,p,q)>1$.
Since the bound is uniform in $\cF$, averaging completes the proof.
\end{proof}

Combining the two lemmas, and recalling that $t=c_0m$ and
$k=\alpha m$, we choose $\eps>0$ sufficiently small with respect to $p,q$, and 
$\alpha>0$ sufficiently small compared with $c_0$, so that
\[
\mathbb P_p(\cB_{m,r})
\le C(1-\eps)^{-t}\left(\frac{4p}{p-q}\right)^k
\exp\{-(p-q)t/20\}
\le C\exp\{-\gamma m\}
\]
for some $\gamma=\gamma(G,p,q)>0$, and note that this bound holds uniformly in $r$. 

Finally, we choose $c\le \min\{\gamma,\alpha\}$ in the exponent such that the factor $C$ can be absorbed into it. This is indeed possible: note that, for small $m$, taking the constant $c$ in Theorem~\ref{thm: main} sufficiently small produces a trivial bound on the event that $|C_o|=\infty$.
Letting $r\to\infty$ and taking $m=\Phi_G(n)$ completes the proof.
\qed

\subsection{The algorithm and proof of Proposition~\ref{prop: feasible}}\label{subsec:algo}
In this section, we prove the aforementioned Proposition~\ref{prop: feasible}. 
To prove this statement, we develop an
algorithm that encodes each $k$-feasible set $U\subseteq M$
by a short binary word. Given a connected set $W$
witnessing the feasibility of $U$, the algorithm produces
a binary word from which $U=W\cap M$ can be uniquely recovered.
The key properties of this encoding are that the word
contains at most $k$ zeroes and has length at most
$3k+(d+1)|U|$. Counting such words then gives the
desired bound on the number of feasible sets.

We now describe the algorithm and then establish
these properties. In this description, parameters called \emph{dynamic} are going to evolve during the algorithm; we denote these by bold letters and we use the symbol $\leftarrow$ to update their values.

\begin{algorithm}\label{algo:count}
\textbf{Input}: a finite connected graph $H_0$ containing a vertex $o$ with an ordering $\sigma$ of its edge set $E(H_0)$, an $H_0$-connected vertex set $W\subseteq V(H_0)$ containing $o$, and a set of \emph{marked} vertices $M\subseteq V(H_0)\setminus \{o\}$.
Initiate a dynamic graph $\bH\leftarrow H_0$, dynamic vertex sets $\bC\leftarrow \{o\}$, $\bC'\gets \varnothing$ and $\bT\leftarrow \varnothing$, a dynamic edge set $\bF\gets \varnothing$, and a dynamic integer $\br\gets 0$. 

\textbf{Output}: we construct a binary word $\bw$ initially set to $\varnothing$.

\textbf{Procedure}: at every iteration, first check whether $\bT=W\cap M$. If this is the case, terminate the algorithm and output the word $\bw$.
Otherwise, update $\br$ to be the size of a minimum edge-cutset in $\bH$ between $\bC$ and $M\setminus \bT$.
Next, update $\bC'$ to be the connected vertex set in $\bH$ which:
    \begin{itemize}[noitemsep]
        \item contains $\bC$ but is disjoint from the set $M\setminus \bT$,
        \item satisfies $|\partial_{\bH} \bC'|=\br$,
        \item has maximum size given the previous two conditions, and
        \item if more than one set satisfies the conditions above, $\bC'$ is chosen so that $\partial_{\bH} \bC'$ is minimal with respect to the lexicographic ordering on the edge subsets of $E(H_0)$ induced by $\sigma$.
    \end{itemize}
Set $\bF\leftarrow \partial_{\bH} \bC'$ and let $e=uv$ be the first edge of $\bF$ in the ordering $\sigma$, with $u\in \bC'$.
\begin{enumerate}[\upshape{(\alph*)}]
    \item\label{(a)} If $v\notin W$, append a 0 to $\bw$, update $\bH\leftarrow \bH\setminus \{e\}$ and $\bC\leftarrow \bC'$.
    \item\label{(b)} If $v\in W\setminus M$, append a 1 to $\bw$ and update $\bC\leftarrow \bC'\cup \{v\}$.
    \item\label{(c)} If $v\in W\cap M$, append a 1 to $\bw$ and update $\bC\leftarrow \bC'\cup \{v\}$ and $\bT\leftarrow \bT\cup \{v\}$.
\end{enumerate}
Finally, proceed to the next iteration.
\end{algorithm}

\begin{figure}[htbp]
\centering
\resizebox{\linewidth}{!}{%
\begin{tikzpicture}[x=1cm,y=1cm,font=\small,
  graph edge/.style={draw=black!48,line width=.8pt},
  vertex/.style={circle,fill=black!72,inner sep=1.8pt},
  marked/.style={rectangle,draw=black!70,fill=white,
    line width=.8pt,minimum size=5.5pt,inner sep=0pt},
  source region/.style={draw=statecolor,fill=statecolor!8,
    line width=1pt}]
  \definecolor{statecolor}{RGB}{38,113,112}
  \definecolor{cutcolor}{RGB}{183,62,62}
  \definecolor{purplecolor}{RGB}{117,89,145}
  \definecolor{edgecolor}{RGB}{168,112,43}
  \path[use as bounding box] (-.15,.1) rectangle (14.5,4.55);

  \foreach \panel/\offset in {0/0,1/7.6}{
    \begin{scope}[xshift=\offset cm]
      \coordinate (o) at (.85,2.65);
      \coordinate (a) at (2.0,3.60);
      \coordinate (b) at (2.0,1.85);
      \coordinate (u) at (3.15,2.65);
      \coordinate (v) at (4.55,2.65);
      \coordinate (mone) at (6.10,3.60);
      \coordinate (mtwo) at (6.10,1.40);

      \path[draw=black!40,fill=black!3,line width=.8pt]
        (.05,2.65)
        .. controls (.05,3.50) and (1.05,4.45) .. (2.0,4.45)
        -- (6.05,4.45)
        .. controls (6.65,4.45) and (6.75,4.05) .. (6.75,3.60)
        .. controls (6.75,3.15) and (6.35,3.00) .. (5.95,3.00)
        .. controls (5.45,3.00) and (5.25,1.95) .. (4.55,1.95)
        -- (4.05,1.95)
        .. controls (3.30,1.95) and (2.85,.85) .. (2.0,.85)
        .. controls (1.05,.85) and (.05,1.80) .. cycle;
      \node[text=black!60] at (5.35,4.13) {$W$};

      \ifnum\panel=0
        \tikzset{cut region/.style={
          draw=cutcolor,fill=cutcolor!8,line width=1pt}}
      \else
        \tikzset{cut region/.style={
          draw=purplecolor,dashed,line width=1pt}}

        \path[source region]
          (.25,2.65)
          .. controls (.25,3.15) and (1.20,4.23) .. (2.0,4.23)
          .. controls (2.80,4.23) and (3.55,3.16) .. (4.03,3.16)
          -- (4.54,3.16)
          .. controls (4.96,3.16) and (5.05,2.90) .. (5.05,2.65)
          .. controls (5.05,2.40) and (4.96,2.14) .. (4.54,2.14)
          -- (4.03,2.14)
          .. controls (3.55,2.14) and (2.80,1.07) .. (2.0,1.07)
          .. controls (1.20,1.07) and (.25,2.15) .. cycle;
      \fi

      \path[cut region]
        (.43,2.65)
        .. controls (.43,3.10) and (1.35,4.02) .. (2.0,4.02)
        .. controls (2.65,4.02) and (3.57,3.10) .. (3.57,2.65)
        .. controls (3.57,2.20) and (2.65,1.28) .. (2.0,1.28)
        .. controls (1.35,1.28) and (.43,2.20) .. cycle;

      \ifnum\panel=0
        \path[source region] (o) circle[radius=.36];
      \fi

      \draw[graph edge] (o)--(a)--(u)--(b)--(o);
      \draw[graph edge] (v)--(mone) (v)--(mtwo);
      \draw[draw=edgecolor,line width=1.5pt] (u)--(v);
      \node[above=4pt,text=edgecolor] at (3.85,2.65) {$e$};

      \foreach \name in {o,a,b,u,v}
        \node[vertex] at (\name) {};
      \node[marked] at (mone) {};
      \node[marked] at (mtwo) {};

      \node[below=1pt] at (o) {$o$};
      \node[above=1pt] at (a) {$a$};
      \node[below=1pt] at (b) {$b$};
      \node[below=4pt] at (u) {$u$};
      \node[below=4pt] at (v) {$v$};
      \node[above=3pt] at (mone) {$m_1$};
      \node[right=5pt] at (mtwo) {$m_2$};

      \ifnum\panel=0
        \node at (3.4,.40)
          {$\bC=\{o\},\quad
            \bC'=\{o,a,b,u\},\quad
            \bw=\varnothing$};
      \else
        \node at (3.4,.40)
          {$\bC\gets \bC'\cup\{v\},
            \qquad \bw\gets 1$};
      \fi
    \end{scope}
  }
\end{tikzpicture}%
}
\caption{Initial state (left) and state after one iteration (right)
of Algorithm~\ref{algo:count}.
The fixed witness $W=\{o,a,b,u,v,m_1\}$ is shaded grey;
squares indicate the marked vertices $M=\{m_1,m_2\}$. The boundary of $\bC'$ on the left is $\bF=\{e\}$.
Since $v\in W\setminus M$, case~\ref{(b)} gives
$\bC\gets \bC'\cup\{v\}$ (coloured in teal) and $\bw$ becomes $1$.}
\label{fig:encoding-algorithm}
\end{figure}
\FloatBarrier

Observe that the algorithm is well-defined and terminates. Indeed, only edges with an endpoint outside $W$ are deleted, so any vertex $v\in (W\cap M)\setminus \bT$ remains connected to $o$ in $\bH$; hence, as long as the algorithm is still running, the next edge $e$ always exists. Moreover, each iteration either deletes an edge or enlarges $\bC$.
Let us note here that the choice of a largest boundary-minimising set $\bC'$ is crucial for controlling the length of the word: case~\ref{(b)} forces the minimum-cut size to increase, while the possible decreases in cases~\ref{(a)} and~\ref{(c)} will be shown to be controlled by $k$ and $|U|$, respectively.

We first establish that the output word determines $W\cap M$.

\begin{lemma}\label{cl: property 1}
Given the input $H_0,M,o,\sigma$ and the output $\bw$ of the algorithm upon termination, the set $W\cap M$ can be uniquely determined.
\end{lemma}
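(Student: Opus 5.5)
The idea is to show that the decoder can replay the whole run of Algorithm~\ref{algo:count} knowing only $H_0,M,o,\sigma$ and the word $\bw$. We then read off $W\cap M$ as the final value of $\bT$.

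We argue by induction on the number of iterations. The claim is that after reading the first $j$ letters of $\bw$, the decoder knows the values $\bH_j,\bC_j,\bT_j$ of the dynamic variables after $j$ iterations. Initially these are $H_0,\{o\},\varnothing$, which are known.

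For the inductive step, we observe that everything computed inside iteration $j+1$, before $W$ is consulted, is a deterministic function of $(\bH_j,\bC_j,\bT_j)$ and the fixed data $M,\sigma$. These quantities are:
\begin{itemize}
\item the value $\br$, the minimum cut size in $\bH_j$ between $\bC_j$ and $M\setminus\bT_j$;
\item the set $\bC'$, which is well defined because of the maximality condition and the lexicographic tie-break via $\sigma$;
\item the edge set $\bF=\partial_{\bH_j}\bC'$;
\item the edge $e=uv$, the first edge of $\bF$ in $\sigma$, with $u\in\bC'$ determined.
\end{itemize}
So the decoder can recompute $\bC'$ and $e=uv$. The next letter of $\bw$ then tells whether $v\in W$.
\begin{itemize}
\item If the letter is $0$, we are in case~\ref{(a)}, so the decoder sets $\bH\leftarrow\bH\setminus\{e\}$ and $\bC\leftarrow\bC'$.
\item If the letter is $1$, we are in case~\ref{(b)} or~\ref{(c)}, and the two cases are told apart by checking whether $v\in M$. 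This is possible since $M$ is known. The decoder updates $\bC\leftarrow\bC'\cup\{v\}$ and, if $v\in M$, also $\bT\leftarrow\bT\cup\{v\}$.
\end{itemize}
This completes the induction.

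Termination needs a little care, because the stopping test $\bT=W\cap M$ involves the unknown $W$. The fix is that the decoder stops when the word is exhausted. The algorithm appends exactly one letter per iteration and terminates exactly when the test succeeds, so after $|\bw|$ replayed iterations the decoder's $\bT$ equals the algorithm's final $\bT$, which is $W\cap M$. The earlier tests failed, but the decoder never needs to evaluate them; it only needs to know how many iterations occurred, and that number is $|\bw|$.

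The only potential obstacle is checking that $\bC'$ is a genuine function of the current state, i.e.\ that the selection rule never depends on $W$. The rule lists only $\bH,\bC,\bT,M,\sigma$, and the lexicographic tie-break resolves any remaining ambiguity. Existence of $\bC'$ and of the edge $e$ was noted above: they exist while the algorithm runs, and the decoder only replays iterations that actually occurred.
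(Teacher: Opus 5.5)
Your proposal is correct and follows the paper's proof essentially verbatim. You replay the algorithm from the known initial state, recompute $\bC'$ and $e=uv$ from the current $(\bH,\bC,\bT)$ and $\sigma$, read a $0$ as case (a) and a $1$ as case (b) or (c) according to whether $v\in M$, and stop after $|\bw|$ replayed iterations; that last point is exactly what the paper's phrase ``after reading the entire word'' uses implicitly.
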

\begin{proof}
Starting from the known initial state, we reconstruct the algorithm one bit at a time. The current values of $\bH,\bC,\bT$, together with $\sigma$, determine $\bC'$, $\bF$, and the next edge $e=uv$. If the next bit is 0, we follow case~\ref{(a)}. If it is 1, we follow case~\ref{(b)} or~\ref{(c)}, according to whether $v\notin M$ or $v\in M$. 
Thus, each bit uniquely determines how to update each of $\bH,\bC,\bT$.
After reading the entire word, the recovered set $\bT$ equals $W\cap M$, as required.
\end{proof}

\noindent
We emphasise that the purpose of the algorithm and the decoding Lemma~\ref{cl: property 1} is not to recover
the set $W$ itself, but only the set $U=W\cap M$. Indeed, many different sets $W$ may have the same intersection
with $M$, and distinguishing between them would lead to a substantial overcounting. 
The key feature of our algorithm is precisely that it encodes only the information needed to recover $U$. Thus, for fixed $H_0,M,o,\sigma$, each output word determines at most one subset of $M$.

Recall that a set $U\subseteq M$ is $k$-feasible if
there exists an $H_0$-connected set $W$ containing $o$
such that $W\cap M=U$ and $|\partial_{H_0}W|\le k$.
Call any such $W$ a witness for $U$. We can now prove Proposition~\ref{prop: feasible}.
\begin{proof}[Proof of Proposition~\ref{prop: feasible}]
Fix an arbitrary ordering $\sigma$ of the edges of $H_0$.
Given a $k$-feasible set $U\subseteq M$ of size $t$, choose an arbitrary witness $W$ and let $w$ be the binary word produced by Algorithm~\ref{algo:count}.
By Lemma~\ref{cl: property 1}, different feasible sets $U$ give different words. 
Thus, it suffices to count the possible output words.

First, we show that $w$ contains at most $k$ zeroes.
Crucially, note that an edge deleted when writing a zero need not belong to $\partial_{H_0}W$ since the boundary-minimising set $\bC'$
may contain vertices outside $W$; we therefore track the boundary of an enlarged witness $W\cup \bC'$.
Let $H_{i-1},C_{i-1},T_{i-1}$ denote the values of $\bH,\bC,\bT$ before iteration $i$, and let $C_i',F_i$ be the values of $\bC',\bF$ during that iteration.
Set $W_0=W$ and $W_i=W\cup C_i'$ for $i\ge1$ (the $W_i$ will play the role of the enlarged witness that we track).
As the sets $C_i'$ are nested, we have $W_i=W_{i-1}\cup C_i'$.
Also, $C_{i-1}\subseteq W_{i-1}$ (since $C_{i-1}\setminus C_{i-1}'$ is either a single vertex in $W$ or empty),
while $C_i'$ is disjoint from $M\setminus T_{i-1}$. Hence, $\partial_{H_{i-1}}(W_{i-1}\cap C_i')$ 
separates $C_{i-1}$ from $M\setminus T_{i-1}$, and the minimality of $F_i$ gives $|\partial_{H_{i-1}}(W_{i-1}\cap C_i')|\ge |F_i|$.
By submodularity of the edge boundary (see e.g.\ \cite[Section~1.1]{KG14}),
\begin{align*}
|\partial_{H_i}W_i|\le |\partial_{H_{i-1}}W_i|
&\le |\partial_{H_{i-1}}W_{i-1}|+|F_i|-|\partial_{H_{i-1}}(W_{i-1}\cap C_i')|\\
&\le |\partial_{H_{i-1}}W_{i-1}|.
\end{align*}
The first inequality is strict in case~\ref{(a)}, since the deleted edge $e=uv$ crosses the boundary of $W_i$: indeed, $u\in C_i'\subseteq W_i$ and $v\notin W\cup C_i'=W_i$.
Thus, the boundary of the enlarged witness $W_i$ never increases and decreases by at least one whenever we write a zero. Since $|\partial_{H_0}W_0|\le k$, the word has at most $k$ zeroes.

Next, we bound the length of $w$. If $t=0$, there is only one possible set $U=\varnothing$, so the proposition is immediate. Thus assume $t\ge1$.
In case~\ref{(a)}, deleting one edge decreases the minimum-cut value by at most one, so $|F_{i+1}|-|F_i|\ge-1$.
In case~\ref{(c)}, we have $|F_{i+1}|-|F_i|\ge-d$: adding the at most $d$ edges incident to the newly recorded marked vertex $v$ to any cut for the next step gives a cut for the preceding step.
Crucially, in case~\ref{(b)}, $|F_{i+1}|-|F_i|\ge1$. Indeed, the graph $H_i$ and target set $M\setminus T_i$ are unchanged, so the minimum-cut value cannot decrease; if it stayed unchanged, there would be 
an $H_i$-connected set with minimum boundary containing $C_i'\cup\{v\}$, contradicting the maximality of $C_i'$.

Let $\tau$ be the number of iterations, so that $T_\tau=U$.
Recall that the submodularity argument gives $|\partial_{H_i}W_i|\le k$ throughout the algorithm.
At intermediate stages, the boundary of $W_i$, together with the edges incident to the vertices in $U\setminus (T_i\cup C_i)$, separates $C_i$ from $M\setminus T_i$. At termination, however, we have that $U\subseteq C_\tau$. Since $C_\tau\subseteq W_\tau$ and $W_\tau\cap M=U$, the boundary of $W_\tau$ itself separates $C_\tau$ from $M\setminus U$. Thus, letting $F_{\tau+1}$ be a minimum cut between these sets in $H_\tau$, we obtain
\begin{equation*}
|F_{\tau+1}|
\le |\partial_{H_\tau}W_\tau|
\le |\partial_{H_0}W|\le k.
\end{equation*}
Writing $\alpha,\beta,\gamma$ for the numbers of iterations of types~\ref{(a)},~\ref{(b)},~\ref{(c)}, respectively, we have $\alpha\le k$ and $\gamma=t$.
The preceding bounds imply $\beta-\alpha-d\gamma\le |F_{\tau+1}|-|F_1|\le k$, and consequently $\tau=\alpha+\beta+\gamma\le 3k+(d+1)t$.

Finally, pad every output word with 1-bits to length $L=3k+(d+1)t$. Since we are counting feasible sets $U$ of fixed size $t$, the padded word still determines $U$: we run the same procedure described above until exactly $t$ marked vertices have been recovered. Thus, padding preserves injectivity, and the number of possible words is at most
\[
\sum_{i=0}^{k}\binom{3k+(d+1)t}{i},
\]
as desired.
\end{proof}

\begin{remark}\label{rem: recover}
Our algorithm also allows us to recover the bound on
important separators from~\cite[Theorem~2]{M11}, used
earlier in the proof.

Let $H$ be a locally finite graph containing a vertex $o$,
and let $B\ni o$ be a finite vertex set.
Recall that a connected set $W\subseteq B$ containing $o$ is a
$B$-minimiser if there is no proper connected superset
$S\supsetneq W$, $S\subseteq B$, with
$|\partial_H S|\le |\partial_H W|$.
We apply Algorithm~\ref{algo:count} with $H_0$ being
the component of $o$ in $H[B\cup N_H(B)]$ and
$M=N_H(B)\cap V(H_0)$, replacing the stopping
condition by the requirement that the minimum cut-value
becomes zero.

For a $B$-boundary-minimiser $W$, induction shows that every
value of $\bC'$ throughout the algorithm is contained
in $W$, and every deleted edge belongs to $\partial_H W$.
Indeed, assuming this for previously deleted edges,
the minimum-cut property and submodularity imply that
adding the chosen 
boundary-minimiser to $W$ cannot
increase its boundary. By $B$-boundary-minimality, this side
must therefore be contained in $W$, and any subsequently
deleted edge belongs to $\partial_H W$.
Consequently, upon termination, the component of $o$
in the remaining graph contains $W$, is contained in
$B$, and has all its boundary edges among those deleted.
By $B$-boundary-minimality, this component must equal $W$,
which can therefore be recovered from the output word.

Moreover, since $W\cap M=\varnothing$, case~\ref{(c)}
never occurs. If $|\partial_H W|\le k$, the word
contains at most $k$ zeroes, since each zero deletes
a distinct edge of $\partial_H W$. Each one increases
the minimum cut-value by at least one, while each
zero decreases it by at most one. Since the final
cut-value is zero, the number of ones is at most
the number of zeroes. Thus, the word has length at
most $2k$. As before, padding shorter words with ones preserves
injectivity. 
This gives at most
$\sum_{i=0}^k\binom{2k}{i}\le 4^k$ such minimisers.
\end{remark}

\vspace{0.2cm}
\noindent
\textbf{Acknowledgements} The authors are grateful to Vincent Tassion and Philip Easo for fruitful discussions. BS was supported in part by SNSF grant 20021-228014.

\textbf{Use of AI:} AI models were used only to generate the figures from photos of hand-drawn ones, as well as to assist with proofreading of the paper.

\bibliographystyle{alpha}
\bibliography{Bib}

\end{document}